\newcommand{\N}{\mathbbm{N}}
\newcommand{\Z}{\mathbbm{Z}}
\newcommand{\R}{\mathbbm{R}}
\providecommand{\abs}[1]{\left\lvert #1 \right\rvert}
\newcommand{\Mat}{\operatorname{Mat}}
\newcommand{\AP}{\operatorname{AP}}
\theoremstyle{plain}
\newtheorem{thm}{Theorem} 
\newtheorem*{thm*}{Theorem} 
\newtheorem{lemma}[thm]{Lemma}
\newtheorem{cor}[thm]{Corollary}
\theoremstyle{definition}
\newtheorem{defn}[thm]{Definition}
\theoremstyle{remark}
\newtheorem{remark}[thm]{Remark}
\newtheorem*{remark*}{Remark}
\newtheorem*{claim*}{Claim}
\newcommand{\refthm}[1]{Theorem~\vref{#1}}
\newcommand{\reflemma}[1]{Lemma~\vref{#1}}
\title{\bf{On arithmetic progressions in nullspaces of integer matrices}}
\author{Jonas Lindstrøm Jensen (jonas@imf.au.dk)}
\date{November 2009}
\begin{document}
\maketitle

\begin{abstract}
Inspired by the Erd\H{o}s-Turan conjecture we consider subsets of the natural numbers that contains infinitely many aritmetic progressions (APs) of any given length -- such sets will be called AP-sets and we know due to the Green-Tao Theorem and Szémeredis Theorem that the primes and all subsets of positive upper density are AP-sets. We prove that $(1,1,\ldots,1)$ is a solution to the equation
\[ M \b x = 0, \]
where $M$ is an integer matrix whose null space has dimension at least $2$, if and only if the equation has infinitely many solutions such that the coordinates of each solution are elements in the same AP. This gives us a new arithmetic characterization of AP-sets, namely that they are the sets that have infinitely many solutions to a homogeneous system of linear equations, whenever the sum of the columns is zero.
\end{abstract}

\section{Introduction}
In this paper we are studying subsets $A \subseteq \N$ that contains arbitrarily long arithmetic progressions. In this paper we give a new characterization of such sets, namely that given an integer matrix whose nullspace has dimension at least two and contains $(1,1,\ldots,1)$, we have that this nullspace contains infinitely many vectors with coordinates in $A$. This gives us a new arithmetic structure on such sets, which gives a new formulation of the Erd\H{o}s-Turan Conjecture.

Since the Green-Tao Theorem gives us that the primes is an AP-set we get as a corollary that we now have a condition for a homogeneous linear equation to have solutions with prime coordinates. Granville has already studied several additive structeres in the primes that can be derived from the Green-Tao Theorem and this paper is inspired by his work.

Several papers has been written on what linear equations have solutions with prime coordinates. Balog \cite{lit:balog} gave a lower bound on the number of prime solutions to a homogeneous system of linear equations $M \b x = 0$ if the matrix $M$ has a certain admissible structure, the null space contains a vector with positive coordinates and $M \b x \equiv 0 \pmod p^\alpha$ has integer solutions coprime to $p$ for all prime powers $p^\alpha$. In particular he proved that if $M$ is admissible and $(1,1,\ldots,1)$ is a solution, then $M \b x = 0$ has prime solutions. Choi, Liu and Tsang \cite{lit:choi} has considered upper bounds for prime solutions to ternary linear equations.

The results in this paper have been found while working on my master thesis and I would like to thank my supervisors Jørgen Brandt and Simon Kristensen for their help. I would furthermore like to thank Andrew Granville for reading and commenting on the results.

\section{APs and GAPs}
As we are considering arithmetic progressions the following notation will come in handy.
\begin{defn}[Arithmetic progressions] Let $k, d \geq 1$ and $a \geq 0$ be integers. Then an \emph{arithmetic progression} (AP) of length $k$, base $a$ and step $d$ is the set
\[ \AP(k, a, d) = \{ a + \lambda d \mid 0 \leq \lambda < k \}. \]
\end{defn}

We consider subsets of $\N$ that contains arbitrarily large arithmetic progressions. We will call these sets AP-sets and define them as follows.
\begin{defn}[AP-set] Let $A \subseteq \N$. We will call $A$ an \emph{AP-set} if there for any $k \geq 1$ exists a pair $(a, d) \in \N^2$ such that
\[ \AP(k,a,d) \subseteq A. \]
\end{defn}
\begin{remark} Notice that an AP-set contains infinitely many APs of any length. \end{remark}

We will now consider generalizes arithmetic progressions which we define as follows.
\begin{defn}[Generalized arithmetic progressions] Let $d \geq 1$, $a \geq 0$, $b_1, \ldots, b_d \geq 1$ and $N_1, \ldots, N_d \geq 1$ be integers. Then a \emph{generalized arithmetic progression} (GAP) of dimension $d$, base $a$, step $(b_1,\ldots,b_d)$ and volume $(N_1, \ldots, N_d)$ is the set
\[ \{ a + n_1 b_1 + \cdots + n_d b_d \mid 0 \leq n_i < N_i \text{ for all } i \}. \]
\end{defn}
\begin{remark} Notice that a GAP of dimension $d$, base $a$, step $(b_1, \ldots, b_d)$ and volume $(2N_1-1, \ldots, 2N_d-1)$ can be written as
\begin{equation}\label{eq:GAPcenter} \{ a' + n_1 b_1 + \cdots + n_d b_d \mid -N_i < n_i < N_i \text{ for all } i \} \end{equation}
where $a' = a + (N_1-1) b_1 + \cdots + (N_d-1) b_d$.
\end{remark}

We can construct a GAP of any dimension and volume from a sufficiently long AP, so in particular an AP-set contains infinitely many GAPs of any given dimension and volume. The following lemma is taken from \cite{lit:granville} and gives us a little more than just GAPs in AP-sets.
\begin{lemma}\label{cor:GAP} Any AP-set containts infitely many GAPs of any given dimension and volume such that each GAP is contained in an AP. \end{lemma}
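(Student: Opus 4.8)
The plan is to show that a single sufficiently long arithmetic progression already contains a GAP of any prescribed dimension and volume, and then to use the defining property of an AP-set to produce infinitely many such GAPs with arbitrarily large least element. Fix a dimension $d\geq 1$ and a volume $(N_1,\ldots,N_d)$, and put $V=N_1N_2\cdots N_d$. The core observation is that if $\AP(k,a,D)\subseteq A$ with $k\geq V$, then $A$ contains a GAP of dimension $d$, base $a$ and volume $(N_1,\ldots,N_d)$ whose steps are built from $D$.

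Concretely I would take the ``mixed radix'' steps $b_1=D$ and $b_i=(N_1\cdots N_{i-1})D$ for $2\leq i\leq d$. Then for integers $0\leq n_i<N_i$ we have $a+n_1b_1+\cdots+n_db_d=a+\lambda D$, where $\lambda=n_1+n_2N_1+\cdots+n_dN_1\cdots N_{d-1}$ ranges exactly over $\{0,1,\ldots,V-1\}$ as the $n_i$ vary; hence this GAP equals $\AP(V,a,D)$, and $\AP(V,a,D)\subseteq\AP(k,a,D)\subseteq A$ whenever $k\geq V$. (Any positive multiples $b_i=c_iD$ with $\sum_i(N_i-1)c_i<k$ would do; the mixed-radix choice has the cosmetic bonus that distinct tuples $(n_1,\ldots,n_d)$ give distinct elements.) I record that the least element of this GAP is its base $a$ — this is what will let me tell different GAPs apart.

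For the ``infinitely many'' clause I would use that $A$ is an AP-set to find, for every bound $B$, an AP of length $\geq V$ inside $A$ whose base exceeds $B$. Given $B$, choose $t$ with $(t-1)V>B$ and apply the AP-set property with length $tV$ to get $\AP(tV,a_0,d_0)\subseteq A$. Splitting it into $t$ consecutive blocks of $V$ terms yields the sub-progressions $\AP(V,\,a_0+jVd_0,\,d_0)\subseteq A$ for $j=0,1,\ldots,t-1$, and since $d_0\geq 1$ and $a_0\geq 0$ the last one has base $a_0+(t-1)Vd_0\geq(t-1)V>B$. Applying the embedding from the previous paragraph to such APs for $B=1,2,3,\ldots$ produces GAPs of dimension $d$ and volume $(N_1,\ldots,N_d)$ whose least elements are unbounded; since distinct least elements force distinct GAPs, there are infinitely many of them, and each is contained in the length-$V$ AP from which it was built, which in turn lies in $A$.

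The embedding of a GAP into a long AP is entirely elementary, so the only point requiring care is the last paragraph: upgrading ``one GAP'' to ``infinitely many GAPs, each sitting inside an AP'', which is exactly what the block-cutting trick handles by pushing the base of the ambient progression past any prescribed bound. I do not anticipate any genuine obstacle beyond keeping the subscripts straight.
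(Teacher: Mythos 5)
Your proof is correct, and it fills in exactly the argument the paper only sketches: the paper gives no proof of this lemma (it cites Granville and remarks that ``we can construct a GAP of any dimension and volume from a sufficiently long AP''), which is precisely your mixed-radix embedding of a volume-$(N_1,\ldots,N_d)$ GAP into an AP of length $N_1\cdots N_d$. Your block-cutting step to push the base past any bound correctly handles the ``infinitely many'' clause, so the proposal is a complete and faithful elaboration of the intended approach.
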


\section{Finding solutions in an AP-set}
Using the existence of GAPs in AP-sets we can now find infinitely many solutions to systems of linear equations in any AP-set. To characterize this we need the following definition of what we mean by solutions in AP-sets. Recall that for a matrix $M$ we let $N(M)$ denote the nullspace of $M$.

\begin{defn} Let $M$ be an integer matrix. Then $M$ is an \emph{AP-matrix} if there is a $k \in \N$ such that for each $a,d \in \N$ there is a vector $v = (v_1,\ldots,v_n) \in N(M)$ such that 
$v_1,\ldots,v_n \in \N$ not all equal, and
\[ v_1,\ldots,v_n \in \AP(k,a,d). \]
\end{defn}

\begin{defn} Let $M$ be an integer matrix. Then $M$ is \emph{null-diagonal} if $\dim N(M) \geq 2$ and $(1,1,\ldots,1) \in N(M)$. \end{defn}

This is exactly the condition we need and we are now ready to prove the following theorem.

\begin{thm}\label{thm:systemsol} Let $M$ be an integer matrix. If $M$ is null-diagonal then it is an AP-matrix. \end{thm}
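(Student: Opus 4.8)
The plan is to produce one fixed ``shape'' vector inside $N(M)$ whose coordinates all lie in a window of bounded width, and then to translate and dilate that shape to fit any prescribed arithmetic progression, using the hypothesis $\mathbf{1} := (1,1,\ldots,1) \in N(M)$ to supply the translation direction inside the nullspace.

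First I would use $\dim N(M) \ge 2$ to choose a vector $w \in N(M)$ that is not a scalar multiple of $\mathbf{1}$. Since $M$ has integer (hence rational) entries, $N(M)$ is a rational subspace and so has a basis of rational vectors; after clearing denominators we may take $w \in \Z^n$, and this rescaling does not affect independence from $\mathbf{1}$. Because $w$ is not proportional to $\mathbf{1}$, its coordinates $w_1,\ldots,w_n$ are not all equal. Put $\mu = \min_i w_i$ and $k = \max_i w_i - \mu + 1$, so that $k \ge 2$, and set $u = w - \mu\,\mathbf{1}$. Then $u \in N(M) \cap \Z^n$, each coordinate satisfies $0 \le u_i \le k-1$, and the $u_i$ are not all equal. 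The vector $u$ and the integer $k$ are now fixed, depending only on $M$.

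Next I would check that this $k$ witnesses that $M$ is an AP-matrix. Let $a,d \in \N$ be arbitrary and set $v = a\,\mathbf{1} + d\,u$. Then $v \in N(M)$, being an integer combination of elements of $N(M)$; its $i$-th coordinate is $v_i = a + d\,u_i$ with $0 \le u_i < k$, hence $v_i \in \AP(k,a,d)$; moreover $v_i \ge a$, so $v_i \in \N$; and since $d \ge 1$ while the $u_i$ are not all equal, the $v_i$ are not all equal. Thus $v$ has exactly the properties required in the definition of an AP-matrix.

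I do not expect a serious obstacle here; the construction is essentially forced once one sees that the two ``degrees of freedom'' $a$ and $d$ in an AP correspond precisely to a translation along $\mathbf{1} \in N(M)$ and a global scaling of the chosen shape vector. The only points needing a little care are the passage to an \emph{integer} vector $w$ (immediate from rationality of $N(M)$) and the observation that the single $k$ built above works simultaneously for every $a$ and $d$. One could alternatively phrase the coordinates of $a\,\mathbf{1} + d\,u$ as lying in a one-dimensional generalized arithmetic progression sitting inside $\AP(k,a,d)$, which is the (mild) way the GAP viewpoint of Lemma~\ref{cor:GAP} enters, but for the present statement the explicit vector above is all that is needed.
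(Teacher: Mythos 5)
Your proof is correct, and it is genuinely different from (and more elementary than) the one in the paper. The paper fixes an integer basis $\b r_1=(1,\ldots,1),\b r_2,\ldots,\b r_d$ of the nullspace, invokes Lemma~\ref{cor:GAP} to obtain a $(d-1)$-dimensional GAP of volume $(2N-1,\ldots,2N-1)$ sitting inside an AP, and reads off the solution $a\,\b r_1+b_1\b r_2+\cdots+b_{d-1}\b r_d$ whose coordinates land in that GAP; linear independence of the $\b r_i$ rules out all coordinates being equal. You instead use only a two-dimensional slice of the nullspace: a single integer vector $u\in N(M)$ with not-all-equal coordinates normalized into $\{0,\ldots,k-1\}$, so that $a\,\mathbf{1}+d\,u$ lands in $\AP(k,a,d)$ for \emph{every} pair $(a,d)$. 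This verifies the definition of an AP-matrix directly and uniformly in $a$ and $d$, with no appeal to the GAP machinery at all; indeed it arguably matches the stated definition more faithfully than the paper's argument, which as written only produces solutions inside the particular APs supplied by Lemma~\ref{cor:GAP} rather than in an arbitrary $\AP(k,a,d)$. What the paper's heavier construction buys is a full $(d-1)$-parameter family of solutions inside each AP (useful for counting many distinct solutions), but for the theorem as stated your single witness per $(a,d)$ suffices, and the distinctness needed for the subsequent corollary still follows by taking APs with arbitrarily large base. The only small points to keep explicit are the ones you already flag: $w$ can be taken in $\Z^n$ because $N(M)$ is a rational subspace, and $k$ is fixed once and for all by $M$.
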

\begin{proof} Each element in $V$ can be written as
\[ m_1 \b r_1 + m_2 \b r_2 + \cdots + m_{d} \b r_{d}, \quad m_i \in \R \]
where $\b r_1 = (1,1,\ldots,1)$, $\b r_i = (r_{i1}, \ldots, r_{in}) \in \Z^n$ for $2 \leq i \leq d$ and $\b r_1, \ldots, \b r_d$ are linearly independent over $\R$. Now let $N = \max_{i,j} \abs{r_{ij}}+1$ and take a GAP of dimension $d-1$ and volume $(2N-1, \ldots, 2N-1)$. According to \reflemma{cor:GAP} we can construct GAPs of any given size such that it is contained in an AP. Now take such a GAP, and as we did in \eqref{eq:GAPcenter} we write it as
\begin{equation}\label{GAPsol} \{ a + n_1 b_1 + \cdots + n_{d-1} b_{d-1} \mid -N < n_i < N \text{ for all } i\}. \end{equation}
Now
\[ a \b r_1 + b_1 \b r_2 + \ldots + b_{d-1} \b r_{d} \]
is in the nullspace of $M$, and each coordinate is an element in the GAP given in \eqref{GAPsol}. Now assume that the solution we have found has all coordinates equal. Then it is equal to $c \b r_1$ for some $c \in \N$ so
\[ (a-c)\b r_1 + b_1 \b r_2 + \ldots + b_{d-1} \b r_{d} = 0. \]
This is not possible since $\b r_1, \cdots, \b r_{d}$ are linearly independent.
\end{proof}

This immedietly yeilds the following corrollary.

\begin{cor} Let $A$ be an AP-set, and let $M$ be null-diagonal. Then the nullspace of $M$ contains infinitely many vectors $v = (v_1,\ldots,v_n)$ such that $v_i \in A$ for all $i=1,\ldots,n$. \end{cor}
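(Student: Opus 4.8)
The plan is to obtain the corollary by feeding the defining property of an AP-set into \refthm{thm:systemsol}; essentially everything is bookkeeping around the two definitions. Since $M$ is null-diagonal, \refthm{thm:systemsol} tells us it is an AP-matrix, so there is a fixed $k \in \N$ with the following property: for every pair $a, d \in \N$ there is a vector $v \in N(M)$ whose coordinates are natural numbers, not all equal, all lying in $\AP(k,a,d)$. Because $A$ is an AP-set, applied to this particular $k$ there is a pair $(a,d)$ with $\AP(k,a,d) \subseteq A$; the vector that \refthm{thm:systemsol} supplies for this $(a,d)$ then lies in $N(M)$ and has all of its coordinates in $A$. This already produces one vector of the desired kind.

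To upgrade this to infinitely many, I would use that the construction can be repeated on arbitrarily ``large'' progressions. Concretely, an AP-set contains APs of length $k$ whose elements exceed any prescribed bound: indeed, $A$ contains arbitrarily long APs, and a length-$k$ sub-progression taken near the end of a very long one has base as large as we like; alternatively one may invoke \reflemma{cor:GAP} directly, since infinitely many GAPs of a fixed dimension and volume cannot all be bounded. Choosing progressions $\AP(k, a_j, d_j) \subseteq A$ with $\min \AP(k,a_j,d_j) \to \infty$ and running the construction of \refthm{thm:systemsol} on each one yields vectors $v^{(j)} \in N(M)$ all of whose coordinates lie in $A$; since each coordinate of $v^{(j)}$ is an element of $\AP(k,a_j,d_j)$, the coordinates of $v^{(j)}$ tend to infinity with $j$, so after discarding repetitions we are left with infinitely many distinct such vectors.

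The only point that requires thought, rather than merely quoting earlier results, is precisely this distinctness/finiteness step: a naive appeal to \refthm{thm:systemsol} gives ``a vector'' for each progression but not obviously a new one each time, and the unbounded-base observation above is what removes the difficulty. A second, even milder point to watch is that the constant $k$ coming out of \refthm{thm:systemsol} may have to be replaced by a larger one so that the GAP used in its proof fits inside an AP of that length; this is harmless, because the AP-set hypothesis on $A$ is quantified over \emph{all} $k$, so any sufficiently large $k$ is still available.
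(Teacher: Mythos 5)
Your argument is correct and is exactly the route the paper intends: the paper states this corollary without proof as an immediate consequence of \refthm{thm:systemsol} combined with the fact that an AP-set contains infinitely many APs of any fixed length. Your extra care in getting \emph{infinitely many distinct} vectors --- by choosing length-$k$ progressions in $A$ with bases tending to infinity --- fills in the only step the paper leaves implicit, and it does so correctly.
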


\section{Prime-like sets}
\refthm{thm:systemsol} gives us a sufficient condition to be able to find infinitely many solutions in an AP-set. Let us now examine in what way it also is a nescessary condition. To examine this we need to require a bit more from our AP-set.
\begin{defn}[Prime-like sets] A set $A \subseteq \N$ is called \emph{prime-like} if for each $\AP(k, a, d) \subseteq A$ with $k \geq 3$ we have $\gcd(a,d) = 1$. \end{defn}

Notice that the primes is prime-like because if we have a progression $\AP(k, a, d)$ in the primes, then $a$ is prime and $d$ is even.

\begin{thm} Let $A$ be a prime-like AP-set, $M \in \Mat_{m,n}(\Z)$ and $k \geq 3$. Assume that the nullspace of $M$ contains infinitely many vectors $(x_1,\ldots,x_n)$ such that for each vector there are $a,d \in \N$ such that
\[ x_1, \ldots, x_n \in AP(k, a, d) \subseteq A. \]
Then the $M$ is null-diagonal.
\end{thm}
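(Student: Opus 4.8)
The plan is to use the rigidity that prime-likeness imposes on short arithmetic progressions: if $\AP(k,a,d)\subseteq A$ with $k\ge 3$, then $\gcd(a,d)=1$, and any vector all of whose coordinates lie in $\AP(k,a,d)$ has the very constrained form $a\mathbf 1+d\lambda$ with $\mathbf 1=(1,\dots,1)$ and $\lambda\in\{0,1,\dots,k-1\}^{n}$. First I would attach to each of the infinitely many given solutions $x\in N(M)$ --- which, as in the definition of an AP-matrix, we take to have coordinates not all equal --- a witnessing pair $(a,d)$ and the corresponding $\lambda=\lambda(x)$, so that $x=a\mathbf 1+d\lambda$, $d\ge 1$, $\gcd(a,d)=1$, and $\lambda$ is non-constant (if $\lambda$ were constant then so would $x$ be).

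Since $\{0,\dots,k-1\}^{n}$ is finite, a pigeonhole step produces one non-constant $\lambda^{*}$ realised by infinitely many of the $x$'s. For these, $x=a\mathbf 1+d\lambda^{*}$; because $\lambda^{*}$ is not a scalar multiple of $\mathbf 1$ and $d\ge 1$, the pair $(a,d)$ is uniquely determined by $x$, so I obtain infinitely many \emph{distinct} pairs $(a_j,d_j)$ with $d_j\ge 1$, $\gcd(a_j,d_j)=1$ and
\[ a_j\,M\mathbf 1 + d_j\,M\lambda^{*} = 0 . \]
At most one of these pairs has $a_j=0$: then $\AP(k,0,d_j)\subseteq A$ with $k\ge 3$ forces $d_j=\gcd(0,d_j)=1$ and hence $x=\lambda^{*}$, a single vector. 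Discarding it, I may assume $a_j\ge 1$ throughout.

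Next I would eliminate $M\lambda^{*}$: from the relations for two indices $1$ and $2$ one gets $(a_1 d_2-a_2 d_1)\,M\mathbf 1=0$. Either $M\mathbf 1=0$, or $a_1 d_2=a_2 d_1$ for every pair of indices, i.e.\ all the $a_j/d_j$ equal a fixed positive rational; but with $\gcd(a_j,d_j)=1$ this would force all $(a_j,d_j)$ to coincide, contradicting that there are infinitely many of them. Hence $M\mathbf 1=0$, that is $(1,1,\dots,1)\in N(M)$, and then $a_j M\mathbf 1+d_j M\lambda^{*}=0$ with $d_j\ge 1$ gives $M\lambda^{*}=0$. So $N(M)$ also contains $\lambda^{*}$, which --- being non-constant --- is not a multiple of $(1,\dots,1)$; therefore $\dim N(M)\ge 2$, and $M$ is null-diagonal.

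The step I expect to be the crux is ruling out that the $a_j/d_j$ are all equal: this is precisely where coprimality (coming from prime-likeness) and the \emph{infinitude} of the solution set are both indispensable. It is also the reason one cannot drop the requirement that the coordinates be not all equal --- a family of constant solutions only ever detects $\mathbf 1\in N(M)$ and never the second dimension, and indeed for $M=(1,-1)$, whose nullspace is spanned by $(1,1)$, the conclusion would fail. (Note the AP-set hypothesis is not actually needed for this direction; it is there only to match the setting of \refthm{thm:systemsol}.)
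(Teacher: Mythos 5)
Your argument is correct, but it takes a genuinely different route from the paper's. The paper works row by row: assuming some row sum $a_{i1}+\cdots+a_{in}\neq 0$, it substitutes $x_l^{(j)}=b_j+\lambda_l^{(j)}d_j$ into the $i$-th equation, uses $\gcd(b_j,d_j)=1$ to deduce that $b_j$ divides $\sum_l a_{il}\lambda_l^{(j)}$, and concludes that $b_j$ and $d_j$ are bounded by a constant times $k$, so the infinitely many solutions would have to come from finitely many APs of length $k$ --- a contradiction. You instead pigeonhole on the offset vector $\lambda\in\{0,\ldots,k-1\}^n$ and eliminate linearly between two relations $a_jM\mathbf{1}+d_jM\lambda^{*}=0$ to force $M\mathbf{1}=0$ and then $M\lambda^{*}=0$. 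Your route buys something real: it exhibits the non-constant nullvector $\lambda^{*}$ and hence proves $\dim N(M)\geq 2$, a part of the conclusion (``null-diagonal'') that the paper's argument never addresses --- the printed proof only establishes that all row sums vanish, i.e.\ $(1,\ldots,1)\in N(M)$. The price is that you must assume infinitely many of the given solutions are non-constant; that hypothesis is absent from the literal statement, but as your $M=(1,-1)$ example shows it is genuinely necessary for the dimension claim, and it is evidently meant to be carried over from the definition of an AP-matrix. Your closing observations (that the AP-set property of $A$ is not used in this direction, and that coprimality plus infinitude is the crux) are both accurate and apply equally to the paper's proof.
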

\begin{proof} Let $1 \leq i \leq m$ be given. Assume for contradiction that $a_{i1} + \cdots + a_{in} \neq 0$. Let $\{(x_1^{(j)}, \ldots, x_n^{(j)}) \mid j \in \N\}$ be the infinitely many solutions  given in the lemma. For each $j \in \N$ there exist $b_j$ and $d_j$ such that $x_l^{(j)} = b_j + \lambda_l^{(j)} d_j$ with $0 \leq \lambda_l^{(j)} < k$ for all $l=1,\ldots,n$ since each $x_l^{(j)}$ is an element of $\AP(k, b_j, d_j)$. Inserting this in $M\b x=0$ we get that we for each $j \in \N$ have
\[ b_j(a_{i1} + \cdots + a_{in}) = -d_j(a_{i1} \lambda_1^{(j)} + \cdots + a_{in} \lambda_n^{(j)}). \]
Since $\gcd(b_j,d_j) = 1$, $b_j$ must divide $a_1 \lambda_1^{(j)} + \cdots + a_n \lambda_n^{(j)}$ so if we let $C = \abs{a_{i1}} + \cdots + \abs{a_{in}}$ we have $b_j \leq Ck$. Now
\[ \abs{d_j} = \abs{b_j \frac{a_1 + \cdots + a_n}{a_1 \lambda_1^{(j)} + \cdots + a_n \lambda_n^{(j)}}} \leq Ck \]
so the set $\{d_j \mid j \in \N\}$ is also finite. The solutions $\{(x_1^{(j)}, \ldots, x_n^{(j)}) \mid j \in \N\}$ are therefore taken from only finitely many APs of length $k$, and there can hence be only finitely many of them. This is a contradiction against the assumption, and this finishes the proof.
\end{proof}


Combining this with the corrollary of \refthm{thm:systemsol} we get the following.
\begin{thm} Let $A$ be a prime-like AP-set and let $M$ be an integer matrix. Then there is a $k \in \N$ such that the nullspace of $M$ have infinitely many vectors with all coordinates being elements of the same AP of length $k$ in $A$ if and only if $M$ is null-diagonal.
\end{thm}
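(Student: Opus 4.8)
The statement is a biconditional, and the plan is simply to assemble it from \refthm{thm:systemsol} together with its Corollary on one side, and from the preceding Theorem on the other; the only place that needs a little care is the forward implication, because the Corollary of \refthm{thm:systemsol} records merely that the coordinates lie in $A$ and forgets the progression, whereas here we must exhibit a \emph{single} length $k$ that simultaneously serves infinitely many solutions.

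For the backward implication, suppose such a $k$ exists (we may take $k\geq 3$, which is the range in which the prime-like hypothesis has bite). Then $N(M)$ contains infinitely many vectors $(x_1,\dots,x_n)$ for each of which there are $a,d\in\N$ with $x_1,\dots,x_n\in\AP(k,a,d)\subseteq A$. This is exactly the hypothesis of the preceding Theorem, whose conclusion is that $M$ is null-diagonal, so nothing further is required.

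For the forward implication, suppose $M$ is null-diagonal. By \refthm{thm:systemsol}, $M$ is an AP-matrix, so there is a $k_0\in\N$ such that every $\AP(k_0,a,d)$ contains the coordinates of some non-constant vector in $N(M)$. Put $k=\max(k_0,3)$; since $\AP(k_0,a,d)\subseteq\AP(k,a,d)$, this larger $k$ still has the AP-matrix property. Now invoke that $A$ is an AP-set (or \reflemma{cor:GAP}): for every $K\geq k$ there is $\AP(K,a,d)\subseteq A$, and this progression contains the sub-progressions $\AP(k,a+\lambda d,d)$ for $0\leq\lambda\leq K-k$, all contained in $A$; in particular $A$ contains length-$k$ arithmetic progressions whose base $a+\lambda d$ is arbitrarily large. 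For each such $\AP(k,a',d')\subseteq A$, the AP-matrix property furnishes a non-constant $v\in N(M)$ with all coordinates in $\AP(k,a',d')\subseteq A$; since then every coordinate of $v$ is at least $a'$, letting $a'\to\infty$ yields infinitely many pairwise distinct such $v$. This produces the required $k$.

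The main thing to be careful about is precisely this forward direction: one should not quote the Corollary of \refthm{thm:systemsol} as a black box, since it drops the progression; instead one returns to the definition of AP-matrix to get the uniform $k$, couples it with the AP-set structure to embed infinitely many length-$k$ progressions in $A$, and then checks that the resulting solutions are genuinely infinite in number — which holds because their coordinates are bounded below by the (unbounded) base of the progression containing them. With that, both directions reduce to the two preceding results.
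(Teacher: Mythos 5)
Your proposal is correct and follows essentially the same route as the paper, which offers no written proof but simply asserts the theorem by combining the preceding theorem (for necessity) with \refthm{thm:systemsol} (for sufficiency); your forward direction usefully spells out what the paper leaves implicit, namely how to get a single uniform $k$ from the AP-matrix definition and why the resulting solutions are infinitely many (unbounded bases force distinct vectors). The only soft spot is the parenthetical ``we may take $k\geq 3$'' in the necessity direction: if the given $k$ were $1$ or $2$ one cannot simply enlarge the progressions, since the longer APs need not stay inside $A$ (and indeed the implication genuinely fails for $k\leq 2$, e.g.\ a matrix with nullspace spanned by $(1,\ldots,1)$ alone), but this imprecision is inherited from the paper's own statement, which tacitly assumes $k\geq 3$.
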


We now give an example of an application of \refthm{thm:systemsol}. This is a known result, see for instance \cite{lit:granville}.
\begin{cor} Let an AP-set $A$ and $n \geq 1$ be given. Then there exists infinitely many $n$-tuples in $x_1,\ldots,x_n \in A$ with $x_i \neq x_j$ for some $i,j$ such that
\[ \frac{x_1 + \cdots + x_n}{n} \in A. \]
\end{cor}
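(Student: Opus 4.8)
The plan is to recognise the averaging condition as membership in the nullspace of a single linear equation and then to apply \refthm{thm:systemsol}. Writing $x_{n+1} := (x_1+\cdots+x_n)/n$, the requirement $(x_1+\cdots+x_n)/n \in A$ together with $x_1,\dots,x_n\in A$ is exactly the statement that the vector $(x_1,\dots,x_n,x_{n+1})$ has all coordinates in $A$ and satisfies $x_1+\cdots+x_n-nx_{n+1}=0$. I would therefore take the $1\times(n+1)$ integer matrix
\[ M=(\,1,\;1,\;\ldots,\;1,\;-n\,), \]
and restrict to $n\ge 2$ (for $n=1$ the clause ``$x_i\neq x_j$ for some $i,j$'' cannot be met, so that case is excluded).

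First I would check that $M$ is null-diagonal. Its row sum is $n\cdot 1+(-n)=0$, so $(1,1,\dots,1)\in N(M)$; and $M$ has rank $1$, so $\dim N(M)=(n+1)-1=n\ge 2$. Hence \refthm{thm:systemsol} applies and tells us that $M$ is an AP-matrix: there is a $k\in\N$ so that for every $a,d\in\N$ there is $v=(v_1,\dots,v_{n+1})\in N(M)$ with $v_1,\dots,v_{n+1}\in\N$ not all equal and $v_1,\dots,v_{n+1}\in\AP(k,a,d)$. (It is the construction in the proof of \refthm{thm:systemsol} that supplies the ``not all equal'' part.)

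Next I would feed in the AP-set hypothesis. Since $A$ is an AP-set it contains $\AP(k,a,d)$ for infinitely many pairs $(a,d)$, in particular for pairs with $a$ arbitrarily large; for each such pair the vector $v$ above has all coordinates in $A$, and letting $a\to\infty$ forces the coordinates, hence the vectors themselves, to be distinct, so we obtain infinitely many such $v$. For each of them put $x_i=v_i$ for $1\le i\le n$; then $Mv=0$ gives $x_{n+1}=v_{n+1}=(x_1+\cdots+x_n)/n\in A$, so it only remains to verify that $x_i\neq x_j$ for some $i,j\le n$. If instead $x_1=\cdots=x_n=c$, then $v_{n+1}=c$ as well and $v$ would be constant, contradicting the ``not all equal'' property; so each $v$ yields an admissible $n$-tuple, and there are infinitely many of them.

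There is no serious obstacle here: once the averaging relation is phrased as a null-diagonal linear system, the result is immediate from \refthm{thm:systemsol}. The only points needing care are the two bookkeeping issues just mentioned — transferring the non-constancy of $v$ to its first $n$ coordinates, and ensuring the produced $n$-tuples are genuinely distinct, which is handled by pushing the base of the progression to infinity.
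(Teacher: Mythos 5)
Your proof is correct and takes essentially the same route as the paper: encode the averaging condition as the single null-diagonal equation $x_1+\cdots+x_n-nx_{n+1}=0$ and invoke \refthm{thm:systemsol}. You are in fact slightly more careful than the paper's own proof, which does not spell out why non-constancy of the full $(n+1)$-tuple forces $x_i\neq x_j$ among the first $n$ coordinates, nor why the solutions obtained are pairwise distinct.
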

\begin{proof} When $n = 1$ it is trivial so let $n \geq 2$ be given. Consider the linear equation
\[ x_1 + \cdots + x_n - nx_{n+1} = 0. \]
From \refthm{thm:systemsol} we know that this equation has infinitely many solutions $x_1,\ldots,x_n,x_{n+1} \in A$ with $x_i \neq x_j$ for some $i,j$. Now for each of these we have
\[ \frac{x_1 + \cdots + x_n}{n} = x_{n+1} \in A, \]
which finishes the proof.
\end{proof}

\section{Zero-solution sets}
We have proved that in any AP-set we can find infinitely many solutions to any system of linear equation, as long as the sum of the columns of the matrix is zero. This motivates the following definition.

\begin{defn}[Zero-solution sets] A set $A \subseteq \N$ is a \emph{zero-solution set} if any null-diagonal $M$ contains infinitely many vectors $\b x = (x_1, \ldots, x_n)$ with $x_1, \ldots, x_n \in A$ and $x_i \neq x_j$ for some $i,j$.
\end{defn}

Now \refthm{thm:systemsol} can be formulated as follows: If $A$ is an AP-set then $A$ is a zero-solution set. We now want to prove that zero-solution sets and AP-sets are the same.
\begin{thm} Let $A \subseteq \N$. Then $A$ is a zero-solution set if and only if $A$ is an AP-set.\end{thm}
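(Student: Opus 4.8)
The plan is to prove the two implications separately, and one of them is already in hand. If $A$ is an AP-set then $A$ is a zero-solution set: this is immediate from \refthm{thm:systemsol} together with its corollary, because a null-diagonal matrix is by definition an integer matrix whose nullspace has dimension at least $2$ and contains $(1,1,\ldots,1)$, hence an AP-matrix; the corollary then supplies infinitely many nullspace vectors with all coordinates in $A$, and the AP-matrix property ensures that for such vectors the coordinates are not all equal. So the substance is the converse.

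Suppose $A$ is a zero-solution set. I must show that for every $k \geq 1$ there are $a, d \in \N$ with $\AP(k,a,d) \subseteq A$, and it is enough to do this for $k \geq 3$, since an AP of length $k$ contains sub-APs of every shorter length, so the cases $k = 1, 2$ follow. The key observation is that a tuple $(x_1, \ldots, x_k)$ is an arithmetic progression exactly when all of its second differences vanish, that is $x_i - 2x_{i+1} + x_{i+2} = 0$ for $1 \leq i \leq k-2$, and that $(1,1,\ldots,1)$ satisfies this system. So I would introduce the $(k-2) \times k$ integer matrix $D_k$ whose $i$-th row has entries $1, -2, 1$ in columns $i, i+1, i+2$ and zeros elsewhere. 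Reading the equations defining the nullspace of $D_k$ as the recurrence $x_{i+2} = 2x_{i+1} - x_i$ shows that a solution is freely determined by $(x_1, x_2)$, so $N(D_k)$ is $2$-dimensional and equals $\{(a, a+d, \ldots, a+(k-1)d) : a, d \in \R\}$; and plainly $(1,1,\ldots,1) \in N(D_k)$. Hence $D_k$ is null-diagonal.

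Applying the zero-solution hypothesis to $D_k$ gives a vector $(x_1, \ldots, x_k) \in N(D_k)$ with every $x_i \in A$ and $x_i \neq x_j$ for some $i, j$. Writing it as $(a, a+d, \ldots, a+(k-1)d)$ with $a, d \in \Z$, the failure of the coordinates to be all equal forces $d \neq 0$, hence $\abs{d} \geq 1$; then $x_1, \ldots, x_k$ are $k$ distinct elements of $A$ and the set $\{x_1, \ldots, x_k\}$ equals $\AP(k, a, d)$ if $d \geq 1$ and $\AP(k, x_k, -d)$ if $d \leq -1$. Either way $A$ contains an AP of length $k$, so $A$ is an AP-set, and the converse is proved.

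I do not anticipate a genuine obstacle: the whole argument rests on the elementary fact that being an arithmetic progression is cut out by homogeneous linear equations that the all-ones vector solves. The only points needing (routine) care are verifying that the rows of $D_k$ are linearly independent, so that $\dim N(D_k)$ equals exactly $2$ --- which one sees by inspecting columns $1, 2, \ldots, k-2$ of $D_k$ in turn --- and handling a possibly negative common difference, which is taken care of by the case split above. It is also worth noting that a zero-solution set is automatically infinite, since applying the hypothesis already to $D_3$ forces two distinct elements into $A$.
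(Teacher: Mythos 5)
Your proof is correct and follows essentially the same route as the paper: both directions reduce the converse to applying the zero-solution hypothesis to a matrix whose nullspace is exactly the two-dimensional space of arithmetic progressions of length $k$, spanned by $(1,1,\ldots,1)$ and $(0,1,\ldots,k-1)$. Your version is in fact slightly more careful than the paper's, since you exhibit the second-difference matrix $D_k$ explicitly rather than merely asserting its existence, and you handle the case of a negative common difference, which the paper glosses over.
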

\begin{proof} The 'if' part we get from \refthm{thm:systemsol}. Let $n \geq 3$ be an integer and let $M \in \Mat_{n-2,n}(\Z)$ be given such that the solution space of $M \b x = 0$ is given by
\[ m_1 (1,1,\ldots,1) + m_2 (0,1,2,\ldots,n-1), \quad m_1, m_2 \in \R. \]
Since $A$ is a zero-solution set there are infinitely many solutions in $A$ with $m_2 \neq 0$. We also see that such a solution is in $A$ so it is integer and both $m_1$ and $m_2$ are hence integer. Each of these solutions gives us an AP of length $n$.
\end{proof}

This result gives us a new formulation of the Erd\H{o}s-Turan conjecture \cite{lit:erdos},
\[ \sum_{a \in A} \frac{1}{a} = \infty \; \Rightarrow \; A \textrm{ is a zero-solution set}. \]

\end{document}